\documentclass[11pt]{article}
\usepackage{graphicx,epsfig}

% babel staff
%\usepackage[T2A]{fontenc}
%\usepackage[utf8]{inputenc}
%\usepackage[koi8-r]{inputenc}
%\usepackage[russian]{babel}

\usepackage{amsmath,amsthm,amstext,amssymb}

\newtheorem{theorem}{Theorem}

\newtheorem{corollary}[theorem]{Corollary}

\theoremstyle{remark}
\newtheorem{example}{Example}
\newtheorem*{remark}{Remark}

\newcommand{\pair}[1]{\langle #1\rangle}
\newcommand{\bo}{\{0,1\}}
\def\phi{\varphi}
\def\le{\leqslant}
\def\ge{\geqslant}
\newcommand{\eps}{\varepsilon}

\newcommand{\CT}{\textit{CT}}
\newcommand{\sing}{\textrm{\rm Singl}}

\newcommand{\eqdef}{\rightleftharpoons}

\title{Short lists with short programs for functions}
\author{Nikolay Vereshchagin\thanks{The work was done while visiting
IMS (University of Singapore), the program ``Algorithmic Randomness'', 2--30 June 2014.
The work
was in part supported by the RFBR grant 12-01-00864.}\\
\\ Moscow State University, Yandex and Higher School of Economics}

\date{}

\begin{document}
\maketitle

\begin{abstract}
Let $\{\phi_p\}$ 
be an optimal G\"odel numbering of the family of computable functions (in Schnorr's sense), where $p$ ranges over binary strings. 
Assume that a list of strings $L(p)$ is computable from $p$ and for all $p$  
contains a $\phi$-program for $\phi_p$ whose length is at most $\eps$ bits larger
that the length of the shortest  $\phi$-program for $\phi_p$. We show that for infinitely many $p$ 
the list  $L(p)$ must have $2^{|p|-\eps-O(1)}$ strings. Here $\eps$ is an arbitrary function of $p$.
\end{abstract}

\section{Results}

A \emph{numbering of a family of computable functions of $m$ variables} 
is a computable partial function $\phi:(\bo^*)^{m+1} \to \bo^*$. 
We call $p$ a \emph{$\phi$-index} or  
a \emph{$\phi$-program} for the function $\pair{x_1.\dots,x_m}\mapsto \phi(p,x_1,\dots,x_m)$, which is
denoted as $\phi_p$.
%For a numbering  $\phi$ let $\phi_p$ denote the mapping $\pair{x_1.\dots,x_m}\mapsto \phi(p,x_1,\dots,x_m)$.
A numbering  $\phi$ is \emph{universal} if 
for all computable partial functions
$f$ from  $(\bo^*)^m$ to $\bo^*$ there is $p$ with $\phi_p=f$. 

By $C_\phi(f)$ we denote the minimal length of a $\phi$-program for $f$ (\emph{Kolmogorov complexity
of $f$ with respect to $\phi$}).
A numbering  $\phi$ has \emph{Kolmogorov property},
if for every other numbering $\psi$ there is a constant $c$ such that 
 $C_\phi(f)\le C_\psi(f)+c$ for all functions $f$.

A numbering $\phi$ is called a \emph{G\"odel numbering} 
if for every other numbering $\psi$ there is a total computable function 
$t$ (called a \emph{translator} from $\psi$ to $\phi$) such that  
$\psi_p=\phi_{t(p)}$ for all $p$. 
A G\"odel numbering  $\phi$ is called an \emph{optimal G\"odel numbering} 
if for all numberings $\psi$ there is a translator $t$  from $\psi$ to $\phi$ that has additional property  
$|t(p)|\le|p|+O(1)$ (the translator is \emph{linearly bounded}).\footnote{
The term ``optimal G\"odel numbering''  was introduced by Schnorr~\cite{schnorr}. Teutsch and Zimand~\cite{tz}
call optimal G\"odel numberings \emph{Kolmogorov numberings}. However Kolmogorov has neither introduced nor 
studied them.} Here and further $|p|$ denotes the length of $p$.
Every optimal G\"odel numbering has Kolmogorov property but not the other way around.

\begin{example}\label{ex1}
Here is an example of an  optimal G\"odel numbering $\phi$ 
of the family of computable functions of $m$ variables.
Let $\Phi$ denote a universal 
%the reference G\"odel 
numbering of the family of computable functions of $m+1$ variables.
Let $p\mapsto \hat p$ denote a computable prefix encoding,
for instance, $\hat p=0^{|p|}1p$.
Then $\phi(\hat pq,x_1,\dots,x_m)=\Phi(p,q,x_1,\dots,x_m)$ is an optimal G\"odel numbering  
of the family of computable functions of $m$ variables.
Indeed, the mapping $t(q)=\hat pq$ is a linearly bounded translator
from the numbering $\Phi_p$ to $\phi$.
\end{example} 

The above  
definitions make sense also for $m=0$. In this case $\phi_p$ is understood as $\phi(p)$ if defined and as a special symbol
$\bot$ otherwise. Optimal G\"odel numberings for $m=0$ were called \emph{standard machines} in~\cite{bmvz} and we will use
the same terminology. 
Kolmogorov complexity $C_U(x)$ of a string $x$  with respect to a standard machine $U$ is the usual Kolmogorov complexity 
(the minimal length of a $U$-program for $x$).

The paper~\cite{bmvz}
shows that for every standard machine $U$, 
given a string $x$ we can find a short list of strings with a short program for $x$:
the size (=cardinality) of the list is $O(|x|^2)$ and it contains 
a $U$-program for $x$ of length at most 
$C_U(x)+O(1)$. 

Is there a total algorithm that computes a short list with a short program 
for $x$ from any $U$-program for $x$?
This question was  asked recently by Alexander Shen~\cite{s}. 
Notice that there is no total algorithm  that maps any program for $x$ 
to $x$ (otherwise the positive answer to the question would immediately follow from the 
cited result of~\cite{bmvz}). We show
that for every standard machine $U$ and for every function $\eps$ of $p$
there are infinitely pairs ($x$, its $U$-program $p$) such that the size 
of $L(p)$  is exponential in both $|x|-\eps$ and $|p|-\eps$ provided $L(p)$
has a program for $x$ of length at most $C_U(x)+\eps$.

Let $C_{U,L}(x)$ denote the minimal length of a $U$-program $p\in L$ for $x$.

\begin{theorem}\label{th1}
Let  $U$ be a standard machine and $L$ a total computable function mapping (binary) strings 
to finite sets of strings. 
Then for some $c$ for all $k$ the following holds. 
There is a string  $x$ and its $U$-program $p$ of length 
between $k$ and $k+c$ such that
$\# L(p)\ge 2^{|x|}-2$ and   $C_{U,L(p)}(x)\ge k$.
\end{theorem}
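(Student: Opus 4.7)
My plan is to build a numbering $\psi$ and invoke the optimal G\"odel property of $U$: for any $\psi$ there is a linearly bounded translator $t:\psi\to U$ with $|t(s)|\le|s|+c_0$ for some constant $c_0$, and I take $c:=c_0$. For each $k$, the target pair will be $(x,p)=(\psi_s,t(s))$ for a suitably chosen $s$ of length $k$.

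The natural first attempt is to make $\psi_s=y$ a string with $C_U(y)\ge k$. Any such $y$ forces $|t(s)|\ge C_U(y)\ge k$, so $|p|\in[k,k+c_0]$, and condition~(B) $C_{U,L(p)}(y)\ge C_U(y)\ge k$ holds unconditionally for any list $L(p)$. Such $y\in\bo^n\setminus\{U(r):|r|<k\}$ exists when $n\ge k$ by pigeonhole and can be selected computably via dovetailing through the short $U$-programs. Condition~(A), however, then requires $\#L(p)\ge 2^{|y|}-2\ge 2^{k-O(1)}-2$, an exponentially large list at the specific program $p=t(s)$; and since $L$ is adversarial, this is not a priori guaranteed.

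To enforce (A) I let the length of $y$ depend on $L$. Using that $L$ is total computable and that $t(s)$ ranges over the finitely many strings of length $\le k+c_0$, I precompute $m_k:=\min\{\#L(q):|q|\in[k,k+c_0]\}$ and set $n:=\lfloor\log_2(m_k+2)\rfloor$, so that $\#L(t(s))\ge m_k\ge 2^n-2$ automatically. When $m_k$ is large enough that $n\ge k$, both conditions follow from the first attempt. Otherwise $n<k$ and the lower bound $|t(s)|\ge k$ can no longer be obtained from $C_U(y)\ge k$; in this regime $L(q)$ is uniformly small on the whole band, so the finite set $\{U(r):r\in L(t(s)),|r|<k\}$ has at most $m_k<2^n$ elements and a length-$n$ string avoiding it still exists, giving~(B) directly by inspection rather than through Kolmogorov complexity.

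The main obstacle is arranging $|t(s)|\ge k$ in this second regime, since an optimal translator may choose a shorter $U$-program when $C_U(y)$ itself is small. The plan is to use Kleene's recursion theorem so that the construction of $\psi$ may refer to $c_0$, and to design $\psi$ so that the $\psi$-index $s$ for the target value is forced to be long: concretely, by making $\psi$ injective on $\bo^k$ so that the $2^k$ distinct programs $t(s)$ cannot all fit in $\bo^{<k}$, at least one $s$ has $|t(s)|\ge k$, and that $s$ becomes the witness. Consolidating both regimes into a single uniform construction with one constant $c=c_0$ working for every $k$ is the principal technical step.
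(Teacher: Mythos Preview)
Your plan has the right scaffolding (build an auxiliary numbering $\psi$, invoke the recursion theorem so the construction knows its own translator into $U$), but the two concrete mechanisms you propose both break.

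\textbf{The uniform choice of $n$ via $m_k$ is inconsistent.} You set $m_k=\min_{|q|\in[k,k+c_0]}\#L(q)$; this gives only a \emph{lower} bound $\#L(t(s))\ge m_k$, which is what you use for condition~(A). In regime~2 you then claim ``$L(q)$ is uniformly small on the whole band'' and bound $\#\{U(r):r\in L(t(s)),|r|<k\}$ by $m_k$, but that needs an \emph{upper} bound on $\#L(t(s))$, which a minimum does not give. (Also $m_k<2^n$ fails for most $m_k$ under your definition of $n$.) The point is that (A) demands $\#L(p)\ge 2^{|x|}-2$ while (B) demands that the list not cover all candidate $x$'s; these pull in opposite directions and can only be reconciled by choosing $|x|$ from the \emph{individual} value $\#L(t(s))$, namely $n$ with $2^{n}-1\le\#L(t(s))<2^{n+1}-1$. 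Since the $S(x)=\{r:U(r)=x,\ |r|<k\}$ are disjoint, the list $L(t(s))$ can hit at most $\#L(t(s))<2^{n+1}-1$ of the $2^{n+1}-1$ strings of length $\le n$, so an unhit $x$ exists. No uniform $n=n(k)$ will do.

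\textbf{The one-shot selection of $y$ is not computable.} You cannot ``select computably via dovetailing'' a $y\in\bo^n$ with $C_U(y)\ge k$, nor a $y$ outside $\{U(r):r\in L(t(s)),|r|<k\}$: these sets are only c.e. What actually works is a \emph{dynamic} construction that maintains, for each $k$, a current index $q_k\in\bo^k$ and value $x_k$; when the ongoing enumeration of $U$ reveals a short program in $L(t(q_k))$ for $x_k$, you discard $q_k$, pick a fresh $q\in\bo^k$, recompute $n$ from $\#L(t(q))$, and choose a new $x$ unhit so far. Each failure is witnessed by a distinct halting $U$-program of length $<k$, so there are fewer than $2^k$ failures and you never run out of fresh $q$'s. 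This replaces both of your regimes at once. Your injectivity trick for forcing $|t(s)|\ge k$ is then incompatible with this scheme (most $\psi$-indices of length $k$ stay undefined); the paper instead handles the lower bound $|p|\ge k$ by a separate reduction: apply the construction to $L'(p)=L(p)\cup\{p\}$, note $p\in L'(p)$ is itself a $U$-program of $x$, and conclude $|p|\ge k$ from $C_{U,L'(p)}(x)\ge k$.
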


\begin{corollary}[A negative answer to Shen's question] 
Let $U,L$ be as in the theorem.
Let $\eps(p)$ denote $C_{U,L(p)}(U(p))-|U(p)|$.
Then for infinitely many $p$ the size of $L(p)$ is at least $2^{|p|-\eps(p)-O(1)}-2$.
Moreover, for those $p$'s the size of $L(p)$ is at least $2^{|U(p)|}-2$ 
and $|U(p)|>|p|-\eps(p)-O(1)$.
\end{corollary}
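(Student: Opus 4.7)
The plan is to deduce the corollary directly from Theorem~\ref{th1} by algebraic manipulation, with the function $\eps(p)$ playing a purely bookkeeping role. I would apply the theorem for each natural number $k$ to produce a sequence of ``bad'' $U$-programs $p_k$, and then check that $\eps(p_k)$ is automatically small enough that the exponential lower bound on $\# L(p_k)$ rewrites in the form asserted by the corollary.

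Concretely, fix the constant $c$ supplied by Theorem~\ref{th1}. For each $k$, let $x_k$ and $p_k$ be the string and its $U$-program given by the theorem, so that $k\le|p_k|\le k+c$, $U(p_k)=x_k$, $\# L(p_k)\ge 2^{|x_k|}-2$, and $C_{U,L(p_k)}(x_k)\ge k$. Since $|p_k|\ge k\to\infty$, the $p_k$ are pairwise distinct for infinitely many $k$, which supplies the ``infinitely many $p$'' quantifier in the corollary.

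Now unpack $\eps$. By definition $\eps(p_k)=C_{U,L(p_k)}(U(p_k))-|U(p_k)|=C_{U,L(p_k)}(x_k)-|x_k|$. Combining this with the bound $C_{U,L(p_k)}(x_k)\ge k\ge|p_k|-c$ yields
\[
|x_k|\ \ge\ |p_k|-\eps(p_k)-c,
\]
which is the ``moreover'' inequality $|U(p)|>|p|-\eps(p)-O(1)$. Plugging this into the theorem's bound on $\# L(p_k)$ gives
\[
\# L(p_k)\ \ge\ 2^{|x_k|}-2\ \ge\ 2^{|p_k|-\eps(p_k)-c}-2,
\]
which is the desired $2^{|p|-\eps(p)-O(1)}-2$ bound, and the intermediate expression $2^{|x_k|}-2=2^{|U(p_k)|}-2$ is the other stated lower bound on $\# L(p_k)$.

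There is no real obstacle here: the whole content of the corollary is already inside Theorem~\ref{th1}, and the only thing to watch is that $\eps(p)$ as defined is not assumed to be nonnegative or bounded in any way, so one must check that the inequalities still go through when $\eps(p_k)$ happens to be negative or large. That check is automatic because we only ever use $\eps(p_k)=C_{U,L(p_k)}(x_k)-|x_k|$ as an identity, not as a sign condition.
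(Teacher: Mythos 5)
Your proposal is correct and follows essentially the same chain of inequalities as the paper's own proof: apply Theorem~\ref{th1} for each $k$, unfold the definition of $\eps(p)$, and deduce $|U(p)|\ge|p|-\eps(p)-O(1)$ from $C_{U,L(p)}(U(p))\ge k\ge|p|-c$. The only difference is that you spell out the ``infinitely many $p$'' step (via $|p_k|\ge k\to\infty$) and note that $\eps$ is used only as an identity, both of which the paper leaves implicit.
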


Notice, that Kolmogorov complexity is less than the length (up to an additive constant)
and hence the corollary holds for $\eps(p)=C_{U,L(p)}(U(p))-C_U(U(p))$ as well.

\begin{proof}[Proof of the corollary]
Let $p,x$ be the pairs existing by Theorem~\ref{th1}. 
The last inequality of the theorem implies
$|x|+\eps(p)=C_{U,L(p)}(x)\ge k=|p|+O(1)$ and hence $|x|\ge |p|-\eps(p)-O(1)$. 
\end{proof}

Let us stress that $L$ is assumed to be a total function. If we allowed 
$L$ to be defined only on those strings $p$ for which  $U(p)$ halts,
then there would be a computable list  $L(p)$ of quadratic size (in the length of
$x=U(p)$)
 with a program for $x$ of length at most $C_U(x)+O(1)$, which follows from 
the result of~\cite{bmvz}.

\begin{example}
This example provides a family of computable lists for which 
the lower bounds for the size of $L$ and for $C_{U,L}(x)$ 
established in Theorem~\ref{th1} are tight.

The lower bound $k=|p|+O(1)$ for $C_{U,L}(x)$  is tight (up to an additve 
constant) 
for any list $L(p)$ containing $p$, for instance, 
for $L(p)=\{p\}$. For this list 
the lower bound for the size is tight too,
however, this is not very impressive, as the list is too small.

There is much larger computable list 
$L(p)=\{p\}\cup\bo^{<|p|}$ for which  
both lower bounds are tight. Indeed,
the length of the string $x$ in the theorem is $|p|+O(1)$, as
$C_{U,L(p)}(x)=C_{U}(x)\le |x|+O(1)$ and on the other hand
$C_{U,L(p)}(x)\ge k=|p|+O(1)$.

Moreover, there are similar lists of any log-caridanilty 
between 0 and $|p|$. 
Fix any computable function $p\mapsto i\le|p|$ 
and consider the computable list
$L_{i}(p)=\{p\}\cup\bo^{<i}$. 
For this list we have $\# L(p)=2^i$ and
$C_{U,L(p)}(x)=C_U(x)$ if $i>C_U(x)$ and 
$C_{U,L(p)}(x)=|p|$ otherwise.

The parameters ($\log\#L_i(p)$, $C_{U,L_i(p)}(x)$) for these lists are shown 
in the following picture (where we drop the subscript $U$):
\begin{center}
\includegraphics[scale=1.0]{prof.eps}
        \end{center}
\label{p1}
More specifically,
they lie on the horizontal straight line segments on the border  of 
the gray area $P$.

Let us show that the lower bound of the size in the theorem is tight for all
computable lists of the form $L_i(p)$. That is, we will show 
that the length of the string $x$ existing by the theorem is $i+O(1)$.
As $2^{|x|}-2\le \#L_i(p)=2^i$, we have 
$|x|-1\le i$ and hence 
$|x|-1\le i\le |p|=C_{U,L_i(p)}(x)+O(1)$.
If $i\le  C_{U}(x)$ then we have 
$|x|-1\le i\le C_{U}(x)$ and hence  these inequalities are equalities up to an additive constant. Otherwise $i> C_{U}(x)$ and hence $C_{U,L_i(p)}(x)=C_U(x)$.
In this case  $|x|-1\le i\le C_{U,L_i(p)}(x)+O(1)=C_U(x)+O(1)$ and again 
 these inequalities are equalities up to an additive constant. 
\end{example}

Theorem~\ref{th1} easily translates to 
optimal G\"odel numberings of functions of arbitrary number of variables.
For general case the statement is the following. Let  $\sing_x$ denote the function 
defined only on the tuple $\pair{x,\dots,x}$ with value $x$. 
Let $C_{\phi,L}(f)$ denote the minimal length of a $\phi$-program $p\in L$ for $f$.

\begin{theorem}\label{th1b}
Let  $\phi$ be an optimal G\"odel function of $m>0$ variables and $L$ a total computable function mapping strings 
to finite sets of strings. 
Then for some
$c$ for all $k$ the following holds. 
There is a string $x$ and a $\phi$-program $p$ of length between $k$ and  $k+c$ 
for the function $\sing_x$ such that  
$\# L(p)\ge 2^{|x|}-2$ and  
$C_{\phi,L(p)}(\sing_x)\ge k$.
\end{theorem}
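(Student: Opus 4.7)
My plan is to reduce Theorem~\ref{th1b} to Theorem~\ref{th1} via translators between standard machines and the optimal G\"odel numbering $\phi$. The singleton function $\sing_x$ essentially encodes the string $x$, so a $\phi$-program for $\sing_x$ plays the role of a $U$-program for $x$, and optimality lets us move between the two settings almost length-preservingly.

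Fix a standard machine $U$ and build two linearly bounded translators as follows. For the forward direction, define the numbering $\psi$ of $m$-ary functions by $\psi_q=\sing_{U(q)}$; this is computable because $\psi(q,x_1,\dots,x_m)$ simulates $U(q)$, checks whether all $x_i$ equal the resulting value, and outputs it. By optimality of $\phi$ there is a translator $t$ with $\phi_{t(q)}=\sing_{U(q)}$ and $|t(q)|\le|q|+O(1)$. For the backward direction, define the numbering $\eta$ of $0$-ary functions by dovetailing over $y$: $\eta_r$ is the value of the first halting computation $\phi_r(y,\dots,y)$ encountered. Then $\eta$ is a computable numbering and $\eta_r=x$ whenever $\phi_r=\sing_x$. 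Since $U$ is a standard machine there is a translator $s$ with $U_{s(r)}=\eta_r$ and $|s(r)|\le|r|+O(1)$; in particular, $s$ sends any $\phi$-program for $\sing_x$ to a $U$-program for $x$ of essentially the same length.

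Define the total computable function $L'(q)=\{\,s(r):r\in L(t(q))\,\}$ and apply Theorem~\ref{th1} to the pair $U,L'$. This yields a constant $c_1$ such that, for each $k$, there are $x$ and a $U$-program $q$ with $|q|\in[k,k+c_1]$, $\#L'(q)\ge 2^{|x|}-2$, and $C_{U,L'(q)}(x)\ge k$. Setting $p=t(q)$ gives $\phi_p=\sing_x$ and $|p|\le k+c_1+O(1)$. The size bound transfers directly: since $L'(q)$ is the image of $L(t(q))$ under $s$, we have $\#L(p)=\#L(t(q))\ge\#L'(q)\ge 2^{|x|}-2$. For the complexity bound, any $r\in L(p)$ with $\phi_r=\sing_x$ yields $s(r)\in L'(q)$ with $U_{s(r)}=x$, so
\[
  |r|\ \ge\ |s(r)|-O(1)\ \ge\ C_{U,L'(q)}(x)-O(1)\ \ge\ k-O(1),
\]
giving $C_{\phi,L(p)}(\sing_x)\ge k-O(1)$.

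The main obstacle is forcing $|p|$ into the required interval $[k,k+c]$ rather than merely bounding it above by $k+c_1+O(1)$: the translator $t$ supplies only $|t(q)|\le|q|+O(1)$, with no matching lower bound, so $p=t(q)$ could in principle be much shorter than $|q|$. I would address this with a padding step. Applying optimality of $\phi$ to the auxiliary padded numbering $\chi_{\hat q\cdot a}=\sing_{U(q)}$ (where $\hat q$ is a prefix-free encoding of $q$ and $a\in\bo^*$ is arbitrary) produces, for each $q$ and each admissible target length, a $\phi$-program for $\sing_{U(q)}$ whose length lies in a narrow window around that target. Choosing the target to be $|q|+d$ for a fixed $d$ large enough to absorb all additive constants, and applying Theorem~\ref{th1} with parameter $k-d$ instead of $k$, forces $|p|\in[k,k+c]$ for a suitable $c$, completing the reduction.
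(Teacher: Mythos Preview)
Your reduction via the translators $t$ and $s$ is exactly the paper's approach: define $L'(q)=s(L(t(q)))$, apply Theorem~\ref{th1}, and read off the bounds for $p=t(q)$. Up to the point where you obtain $|p|\le k+O(1)$, $\#L(p)\ge 2^{|x|}-2$, and $C_{\phi,L(p)}(\sing_x)\ge k-O(1)$, everything is correct (the $O(1)$ loss in the last inequality is handled, as the paper does, by invoking Theorem~\ref{th1} with parameter $k+c'$ where $c'$ bounds the length increase of $s$).

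The genuine gap is the padding step. The optimality of $\phi$ gives, for the numbering $\chi_{\hat q a}=\sing_{U(q)}$, a translator $\tau$ with $|\tau(\hat q a)|\le|\hat q a|+O(1)$, but \emph{no lower bound} on $|\tau(\hat q a)|$. Nothing prevents $\tau$ from simply decoding $q$ from the prefix $\hat q$, discarding $a$, and outputting a fixed short $\phi$-program depending only on $q$; then $|\tau(\hat q a)|\le|q|+O(1)$ regardless of $|a|$. So the claim that the resulting $\phi$-program ``lies in a narrow window around the target'' is false for general optimal G\"odel numberings, and the proposed mechanism for forcing $|p|\ge k$ does not work. (Even if it did, applying Theorem~\ref{th1} with parameter $k-d$ would only yield $C_{\phi,L(p)}(\sing_x)\ge k-d-O(1)$, not $\ge k$.)

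The paper closes this gap by a different, one-line trick: replace $L$ by $L'(p)=L(p)\cup\{p\}$ and run the reduction with $L'$. Since $p\in L'(p)$ is itself a $\phi$-program for $\sing_x$, the conclusion $C_{\phi,L'(p)}(\sing_x)\ge k$ forces $|p|\ge k$ directly, and $\#L(p)\ge\#L'(p)-1\ge 2^{|x|}-2$.
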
   

\begin{remark} 
Theorem~\ref{th1b} holds for numberings of enumerable sets with the singleton set $\{x\}$ is place of the function $\sing_x$.
The proof is entirely similar.
\end{remark}

For the string $p$ from Theorem~\ref{th1b} we have 
\begin{equation}\label{eq2}
\log\#L(p)+C_{U,L(p)}(\phi_p)\ge C_U(\phi_p)+|p|-O(1). 
\end{equation}
Indeed, $\log\#L(p)\ge |x|-O(1)\ge C_U(\phi_p)-O(1)$ and
$C_{U,L(p)}(\phi_p)\ge k\ge |p|-O(1)$. Summing these inequalities we get~\eqref{eq2}.

Theorem~\ref{th1b} answers a question
asked recently by  Teutsch and Zimand.
For a numbering $\phi$ of computable functions of one variable, Teutsch and Zimand~\cite{tz}
considered the set of minimal programs for $\phi$,
where $p$ is called \emph{minimal}, if
for all $q<p$ we have $\phi_q\ne\phi_p$. Here $<$ denotes the lexicographical ordering
on binary strings (more precisely, $p<q$ iff $|p|<|q|$ or $|p|=|q|$ and $p$ is lexicographically less than $q$). 
The minimal $\phi$-program for a function $\phi_q$ is denoted 
by $\min_{\phi} (q)$. Teutsch and Zimand showed the following.
\begin{itemize}
\item If $\phi$ is a G\"odel numbering and a computable function $L$ on input
$p$ returns a list $L(p)$ containing $\min_\phi(p)$, then the size of
that list cannot be constant.
\item
For every numbering $\phi$ with Kolmogorov property, if a computable
function $L$ on input $p$ returns a list containing $\min_\phi(p)$, 
then the size of the list must be $\Omega(|p|^2 )$.
\item There exists an optimal G\"odel numbering $\phi$ such that if a computable
function on input $p$ returns a list containing $\min_\phi(p)$, then the size of that list must be $\Omega(2^{|p|})$.
\end{itemize}
In summary, their results show that a computable list that contains the minimal $\phi$-program cannot be too small. 

Along the lines of the second result Teutsch and Zimand 
asked the following question: is there an optimal G\"odel numbering $\phi$ with a computable
list $L(p)$ that contains $\min_\phi(p)$ and has size $O(|p|^2)$?

Theorem~\ref{th1b}  implies the negative answer to this question. Indeed, if 
$ \min_\phi(p)\in L(p)$ for all $p$ then $C_{\phi, L(p)}(\phi_p)=C_{\phi}(\phi_p)$ for all $p$.
By~\eqref{eq2} for the pair  $p,x$ existing by the theorem 
the size of $L(p)$  must be at least $2^{|p|-O(1)}$.
In other words, the third result of  Teutsch and Zimand 
holds for \emph{all} optimal G\"odel numberings $\phi$.

So far we were constructing for a given computable function $L$
inputs $p$ such that the list $L(p)$ has large parameters
$\#L(p)$ and $C_{U,L(p)}(U(p))$. Let us consider the ``short list with short programs'' problem
from the other end. Are there $p$'s such that 
\emph{every} short  list $L$ computable from $p$ by a total algorithm has high parameters 
$\#L(p)$ and $C_{U,L}(U(p))$? In this form the question is trivial:
we can hard-wire the shortest $U$-program $q$ for  $U(p)$ 
into a total algorithm which will return the list $\{q\}$, which has optimal  parameters.
The question becomes reasonable if 
we restrict the complexity, say by $O(\log|p|)$, of the total algorithm producing the list from 
$p$. 

To make this question
precise consider the \emph{total complexity}  $\CT_\Phi(a|b)$ 
defined as the minimal length of a $\Phi$-program of a total function that maps $b$ to $a$.
Here $\Phi$ is an optimal G\"odel numbering of computable functions of one variable. 

Fix a natural  $\delta$ (the upper bound for total complexity).
Then for each $p$ consider the set
$$
S^{\delta}_p=\{(i,j)\mid \exists L,\ \CT(L|p)\le\delta,\ \#L\le 2^i,\ 
C_{U,L}(x)\le j\},
$$
where $x$ stands for $U(p)$.
The larger this set is the better parameters may have lists $L$ with small  $\CT(L|p)$.
If $\delta\ge \log|p|+O(1)$ then 
the list  $\bo^{i}$ for $i=C_U(U(p))$ and the list $\{p\}$ witness
that the set $S^{\delta}_p$ includes the entire gray set 
$P$ on the picture from Example~\ref{ex1}. 

The set $S^{\delta}_p$ may be much larger then the gray set $P$. For instance,
this happens when $p$ is a shortest program for $x=U(p)$. In this case 
the set $S^{\delta}_p$ coincides with the set of all points above 
the dashed line.
Are there infinitely many $p$
such that the set $S^{\delta}_p$ is close 
to the gray set $P$ in the picture?  In other words, are there infinitely many $p$
such that 
for every list $L$ with  $\CT_\Phi(L|p)\le\delta$ either $\log_2\# L>C_U(x)$,
or $C_{U,L}(x)\ge |p|$ (with certain accuracy)?
A  positive answer is provided by the following 

\begin{theorem}\label{th2}
Let $U$ be a standard machine.
For all $n$ and all $k>n$ there is a string $x$ with 
$C_U(x)=n+O(1)$ and its $U$-program $p$ of length at most $k+O(1)$ 
such that for all $\delta<k-\log k-O(1)$ and all $L$ with $\CT_\Phi(L|p)=\delta$ either $\# L\ge2^{n-\delta-\log k-O(1)}$ or
 $C_{U,L}(x)\ge k-1$.  
\end{theorem}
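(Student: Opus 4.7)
The plan is to combine the padding construction of Example~\ref{ex1} with a counting argument over the (at most $2^{k-\log k+1}$) total $\Phi$-programs of length at most $k-\log k-O(1)$. Fix any $x$ with $C_U(x)=n+O(1)$ and a shortest $U$-program $\sigma^*$ for $x$. Using the optimality of $U$, translate the auxiliary numbering $\Psi(rs)=U(s)$ (which ignores a padding prefix $r$ of fixed length $a=k-n-O(1)$) to $U$; applying the translator to $r\sigma^*$ yields a family $\{p_r:r\in\bo^a\}$ of $U$-programs for $x$ satisfying $|p_r|\le k+O(1)$, $r\mapsto p_r$ an effective injection, and a total computable inverse $p_r\mapsto r$ on the image.

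Call $r$ \emph{bad} if there exist $\delta<k-\log k-O(1)$ and a total $\Phi$-program $d$ of length $\delta$ such that $\#\Phi_d(p_r)<2^{n-\delta-\log k-O(1)}$ and $\Phi_d(p_r)$ contains a $U$-program for $x$ of length less than $k-1$. The goal is to bound the number of bad $r$'s strictly below $2^a$; then any good $r$ yields $(x,p_r)$ as witness to the theorem.

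To count bad $r$'s, attach to each bad $r$ a witness $(\delta,d,i,q)$, where $i<2^{n-\delta-\log k-O(1)}$ is the position of $q=\Phi_d(p_r)[i]$ in the list. Given $(\delta,d,i,q)$ and knowing $x=U(q)$, one enumerates all $r'\in\bo^a$ consistent with the witness by computing $p_{r'}$, then $\Phi_d(p_{r'})$, and checking the size and indexed-element conditions; the true $r$ is then specified by its position in this enumeration. Summing over the at most $2^{\delta+1}$ choices of $d$, the $2^{n-\delta-\log k-O(1)}$ choices of $i$, and the at most $k$ choices of $\delta$ produces $O(k\cdot 2^{n-\log k})$ witness types, and the factor $\log k$ in the theorem absorbs the cost of encoding $\delta$ and the surrounding bookkeeping.

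The main obstacle is controlling the ambiguity of this enumeration: a priori, many $r'$'s may be consistent with the same $(\delta,d,i,q,x)$, since different parameter values can produce lists sharing $q$ at the same index. Overcoming this requires exploiting the padding structure so that $r$ is essentially pinned down by the witness up to an $O(\log k)$-bit residual, either via the invertibility $p_r\mapsto r$ (converting a compression of $q$ given $p_r$ into a compression of $r$), or by choosing $x$ so that the fiber $U^{-1}(x)\cap\{|q|<k-1\}$ is separately bounded by a counting argument on strings of complexity $n+O(1)$. Once this is done, the total number of bad $r$'s is less than $2^a$, and pigeonhole produces a good $r$, completing the proof.
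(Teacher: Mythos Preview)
Your approach has a genuine gap, and it is exactly the obstacle you flag but do not resolve. With the padding construction $p_r=t(r\sigma^*)$ (for the standard machine of Example~\ref{ex1} the natural translator is $t(w)=\hat e\,w$, so $p_r=\hat e\,r\,\sigma^*$), the shortest program $\sigma^*$ sits as a suffix of $p_r$ at a fixed offset. Hence the total function ``output the singleton consisting of the suffix of the input starting at position $|\hat e|+a+1$'' has description length $O(\log k)$, is total, and returns the list $\{\sigma^*\}$ on every $p_r$. This single adversary has $\delta=O(\log k)$, $\#L=1<2^{n-\delta-\log k-O(1)}$ (for $n$ not tiny), and $C_{U,L}(x)=|\sigma^*|=n<k-1$; so \emph{every} $r$ is bad and no counting over $r$ can succeed. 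Neither of your suggested fixes helps: bounding $|U^{-1}(x)\cap\bo^{<k-1}|$ is irrelevant once $\sigma^*$ itself is extractable, and ``compressing $q$ given $p_r$'' gives no handle on the number of bad $r$'s when all $r$ share the same $q=\sigma^*$. The difficulty is intrinsic to any transparent padding: if $p$ is built from $\sigma^*$ by a simple total recipe, a simple total recipe undoes it.

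The paper's proof proceeds entirely differently and avoids this trap by \emph{not} fixing $x$ in advance. It builds an auxiliary machine $V$ via an enumeration/game (in the style of the proof of Theorem~\ref{th1}): for each pair $(k,n)$ it maintains a current $(q_{kn},x_{kn})$ and, whenever some short total $\bar\Phi_d$ on $q_{kn}$ produces a small list containing a short $\bar U$-program for $x_{kn}$ (or $x_{kn}$ becomes too simple), it moves to a fresh $q_{kn}$ and then \emph{chooses a new $x_{kn}$ of length $n$} avoiding all such lists seen so far. A counting of how many times this can happen shows fewer than $2^k$ fresh $q$'s are ever needed, so the process stabilises; translating $q_{kn}$ from $V$ to $U$ yields the required $p$. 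The crucial point is that $x$ is selected \emph{after} looking at what the adversaries' lists contain on the current $q$, which is precisely what a padding-of-a-fixed-$x$ argument cannot do.
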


Notice that the inequality  $C_{U,L}(x)\ge k-1$ for 
$L=\{p\}$ implies that $|p|\ge k-O(1)$ and hence $|p|= k-O(1)$.

\section{The proofs}

We first drop in Theorems~\ref{th1} and~\ref{th1b}
the requirement $|p|\ge k$. As a reward, the lower bound for the 
list size will be a little bit stronger: $2^{|x|}-1$ in place of $2^{|x|}-2$.
                   
\begin{proof}[Proof of Theorem~\ref{th1}]
Let us first show that the statement of Theorem~\ref{th1} is invariant: if it holds 
for some standard machine $U$ then it holds for any other standard machine $U'$.
Indeed, assume that Theorem~\ref{th1} holds for a standard machine $U$.
To show Theorem~\ref{th1} for another standard machine  $U'$ and a list $L'(p')$,
choose a linearly bounded  translator $t$ from $U'$ to $U$ and 
a linearly bounded translator $s$ from $U$ to $U'$. Let $c'$ be a constant with
$|t(p')|\le |p'|+c'$.

Apply Theorem~\ref{th1} to  
the machine $U$ and the list $L(p)=t(L'(s(p)))$.
By Theorem~\ref{th1} for 
all $k$ there is a string  
$x$ and its $U$-program $p$ of length at most $k+c'+c$ such that  
$\# L(p)>2^{|x|}-1$ and 
the list $L(p)$ does not contain any $U$-program for $x$ of length less than $k+c'$. 

Let $p'=s(p)$. By construction, 
$$
|p'|\le  |p|+O(1)\le k+c'+c+O(1).
$$
We also have
$$
\# L'(p')\ge \# t(L'(p'))\ge 2^{|x|}-1.
$$
Finally the list $t(L(p'))$ does not contain any $U$-program of length less than 
$k+c'$ for $x$.
Hence the list $L(p')$ does not  contain any $U'$-program of length less than 
$k$ for $x$.

Thus it suffices to prove Theorem~\ref{th1} for the standard machine  
$U$ from Example~\ref{ex1},
that is for $U(\hat pq)=\Phi(p,q)$ where 
$\Phi$ is a G\"odel numbering   
of the family of computable functions of one variable.
%W.l.o.g. we may assume that $\Phi$ is the reference standard machine.

We will let $p=\hat r q$ where $q$ is a string of length $k$ 
and $r$ does not depend on $k$.
The statement of the theorem will follow from the following
properties of $r,q$ 
and the function  $V\eqdef \Phi_r$ (of one variable):
\begin{itemize}
\item  $q$ is a $V$-program of a string $x$ such that 
\item $\# L(\hat rq)\ge 2^{|x|}-1$ and 
\item the list $L(\hat r q)$ contains no $U$-program for  $x$ of length less than $k$.  
\end{itemize}
Notice that the string $p=\hat rq$ has all the required properties.
%By the properties of  $r,q,V$ 
%the list $L(p)$ either has size at least $2^{n}$ or does not contain any $U$-program for  $x$ of length less than $k$.   

It remains to find such $V,r$ and $q$.
The computable function $V$ and its $\Phi$-program $r$ will be defined using 
the Kleene fixed point theorem~\cite{rogers}.
By that theorem we may assume that computing $V$ we have access to a $\Phi$-program $r$
for $V$. 
%The string $r$ will be a $\Phi$-program of $V$ that exists by the fixed point theorem.
We construct an algorithm that enumerates the graph of $V$. 

\textbf{The algorithm enumerating the graph of $V$}.
We maintain for all $k$  a string
$q_k$ of length $k$ and a string $x_k$. At the start let $q_k$ be any string of length $k$ and
let $x_k$ be the empty string.  
Enumerate  all  the pairs $\pair{q_k,x_k}$ into the graph of $V$ 
thus letting $V(q_k)=x_k$. 

%that does not 
%belong to the list $L(\hat r q)$ provided $|L(\hat r q)|<2^{n}$ 
%(where  $r$ is a $U$-program for $V$ existing by the fixed point theorem).
%If $|L(\hat r q)|\ge 2^{n}$ then let $x_k$ be any string. 

Then we start an enumeration of the graph of  
$U$. Each time a new pair appears in that enumeration,
we look if the current situation is good or not.
We consider the current situation
\emph{good for } $k$ if 
the pair $\pair{q_k,x_k}$ has been enumerated  into the graph of $V$, 
$\# L(\hat r q_k)\ge2^{|x_k|}-1$ and 
the list $L(\hat r q_k)$ has no $\bar U$-program for $x_k$ of length less than $k$,
where $\bar U$ denotes the sub-function of $U$ consisting of all pairs enumerated so  far.  

At the start $\bar U=\emptyset$ and thus the situation is good for all $k$.
Each time a new pair appears in the enumeration of the graph of $U$,
we look whether the situation has become bad for some $k$.
Obviously that may happen only if a pair $\pair{s,x_k}$ with
$|s|<k$ and $s\in L(\hat r q_k)$ is enumerated. 
In that case pick a new string $q$ of length  $k$
(``new'' means that $q$ has not been used as $q_k$ earlier). 
Let $n$ be the integer with $2^{n+1}-1>\# L(\hat r q)\ge 2^{n}-1$.
For all strings $x$ of length at most $n$ consider the set $S(x)=\{s \mid \bar U(s)=x,\ |s|<k\}$
of $\bar U$-programs for $x$ of length less than $k$.
Pick any string $x$ of length at most $n$ such that $S(x)$ does not intersect the list  $L(\hat rq)$. 
As $\# L(\hat r q)< 2^{n+1}-1$ and the number of $x$'s is $2^{n+1}-1$, there is such $x$.
Then let $q_k=q$, $x_k=x$ and enumerate the pair $\pair{q,x}$
into the graph of $V$. The situation has become good for $k$.
\textbf{End of Algorithm.}

By Kleene's theorem for some $r$ this algorithm enumerates the
graph of the function $\Phi_r$. 
Let us show that for each  $k$, starting from some moment the situation is good for $k$.
Indeed, for any $k$ the situation may become bad less than $2^{k}$ times for $k$, as that may happen only after
a new pair of the form $\pair{s,x_k}$ with $|s|<k$ has appeared. 
On the other hand, the number of strings $q$ of length $k$ is $2^{k}$ 
and hence we indeed  are able to repair the situation $2^{k}-1$ times. 
\end{proof}

\begin{proof}[Proof of Theorem~\ref{th1b}]
Let  $\sing_\bot$ stand for the nowhere defined function.
There is a linearly bounded total computable translator $t$ mapping any $U$-program for $x\in\bo^*\cup\{\bot\}$ 
(for a standard machine $U$) to a $\phi$-program  for the function $\sing_x$.
There is also a  linearly bounded total computable translator $s$ mapping any $\phi$-program for $\sing_x$ 
back to a $U$-program for $x$. Given a list $L$ we just apply Theorem~\ref{th1} to the list $L'(p')=s(L(t(p')))$ and $k+c'$, 
where $c'$ is a constant with $|s(p)|\le |p|+c'$.  
\end{proof}

It remains to prove 
Theorems~\ref{th1} and~\ref{th1b} as they are stated, that is, 
with the requirement $|p|\ge k$ and with the lower bound 
$2^{|x|}-2$ for the list size.
Given any computable list $L(p)$ we add $p$ into the list and apply Theorem~\ref{th1} in the proven form
to the resulting list $L'(p)$. The list $L'(p)$ does have a $U$-program for $x$ of length
$|p|$ and has no $U$-program for $x$ of length less than $k$. This implies that $|p|\ge k$.
The program $p$ fulfills Theorem~\ref{th1} in the original form. 

The proof of  Theorem~\ref{th1b} is entirely similar.

\begin{remark}
As Jason Teutsch observed, one can prove Theorem~\ref{th1} without using the
fixed point theorem. To this end we modify the construction of $V$ so that 
$V$ becomes a standard machine. Specifically, we first let $V_{0q}=U_q$  
for all strings $0q$ starting with zero, where $U$ is any standard machine. 
Then we define $V_{1q}$ so that   
for all $k$ there is a string $1q$ of length $k+1$ such that
\begin{itemize}
\item  $1q$ is a $V$-program of a string $x$ such that 
\item $\# L(1q)\ge 2^{|x|}-1$ and 
\item the list $L(1q)$ contains no $V$-program for  $x$ of length less than $k$.  
\end{itemize}
This can be done by the same technique. The function $V$ defined in this way 
satisfies the theorem. As we already observed, this implies that the theorem holds for
all standard machines.
\end{remark}

\begin{proof}[Proof of Theorem~\ref{th2}]
The proof is very similar to that of Theorem~\ref{th1}.
We construct a computable function  $V$ such that
for all $k>n$ there are strings  $q,x$ of lengths $k,n$, respectively, 
with
\begin{itemize}
\item  $V(q)=x$,
\item $C_U(x)>n-1$,
\item for all $\delta<k-\log k-2$ and all $L$ with $\CT_\Phi(L|q)=\delta$ and $\#L<2^{n-\delta-\log k-1}$ we have 
$C_{U,L}(x)\ge k-1$.
\end{itemize}

\textbf{The algorithm enumerating the graph of $V$}.
This time we maintain for all $k$  a bunch of pairs 
$\{(q_{kn},x_{kn})\mid n=0,1,\dots,k-1\}$. The length of $q_{kn}$ is
$k$ and the length of  $x_{kn}$ is $n$. At the start let $q_{kn}$ be the $n$th string of length $k$ and
let $x_{kn}$ be the first string of length $n$ (independent of $k$).  
Enumerate  all  the pairs $\pair{q_{kn},x_{kn}}$ into the graph of $V$ 
thus letting $V(q_{kn})=x_{kn}$. 

Then we start an enumeration of the graph of  
$U$ and an enumeration of the graph of $\Phi$. We denote  by $\bar U$ and $\bar \Phi$
the sub-functions of $U$ and $\Phi$ consisting of all pairs (triples) enumerated so  far. 
For each $k$ we look if the situation
is \emph{good for} $k$. This means that for all $n<k$  
the pair $\pair{q_{kn},x_{kn}}$ has been enumerated  into the graph of $V$, 
$C_{\bar U}(x_{kn})\ge n-1$ and for all $\delta<k-\log k-2$ and all $L$ with $\CT_{\bar\Phi}(L|q_{kn})=\delta$ and 
$\#L<2^{n-\delta-\log k-1}$ 
we have 
$C_{\bar U,L}(x_{kn})\ge k-1$. Here $\CT_{\bar\Phi}(L|q)$ means the minimal length of $p$ 
such that $\bar\Phi_p$ is defined \emph{on all $q'$ of length $k$} 
and $\bar\Phi_p(q)=L$.

At the start $\bar U$ and $\bar\Phi$ are empty and thus the situation is good
for all $k$.
Each time a new pair (triple) appears in the enumeration of the graphs of $U$ or $\Phi$,
we look whether the situation has become bad for some $k$.
This may may happen only if $C_{\bar U}(x_{kn})$ has become less than $n-1$ (for some $n<k$) or a new list
$L$ with $\CT_{\bar\Phi}(L|q_{kn})<k-\log k-2$ 
appeared or for an old list $L$ the value $C_{\bar U,L}(x_{kn})$ has become less than $k-1$ (for some $n<k$).
In all the cases we first change
$q_{kn}$ and then we change $x_{kn}$. The string 
$q_{kn}$ is replaced by 
any  a new string $q$ of length  $k$
(``new'' means that $q$ has not been used as $q_{k*}$ earlier). 
The string $x_{kn}$ is replaced by any string $x$ of length  $n$ 
such that $C_{\bar U}(x)\ge n-1$ and 
the set $S(x)=\{s \mid \bar U(s)=x,\ |s|<k-1\}$
does not intersect the union (over all $\delta$) of all lists  $L$ of cardinality less than $2^{n-\delta-\log k-1}$ with  
$\CT_{\bar\Phi}(L|q)=\delta$.
Notice that for every $p$ there is only one list $L$ with $\bar\Phi_p(q)=L$
thus the total number of strings in all these lists is less than $\sum_{\delta<k}2^{\delta}\cdot 2^{n-\delta-\log k-1}=2^{n-1}$.
On the other hand, the number of strings $x$ of length  $n$ 
with  $C_{\bar U}(x)\ge n-1$ is more than $2^{n-1}$. Thus there is such $x$. 

Then let $q_{kn}=q$, $x_{kn}=x$ and enumerate the pair $\pair{q,x}$
into the graph of $V$. The situation has become good for $k$.
\textbf{End of Algorithm.}

We have to show that we are able to choose a new string of length $k$ each time we need one.
Any replacement of a string of the form $q_{k*}$ is caused by 
\begin{itemize}
\item discovering a new $p$ of length less than $k-2\log k-2$ such that $\Phi_p$ is defined on all strings of length $k$
(this may cause replacement of the whole bunch of $q_{kn}$'s, for all $n<k$), or 
\item  discovering  a new $U$-program $r$ of length less than $k-1$,
which may cause the replacement of $q_{kn}$ only if $U(r)=x_{kn}$ thus for a single  $n$, or
\item discovering a new halting  $U$-program of length less than $n-1$ for $x_{kn}$, 
which again may cause the replacement of $q_{kn}$ only for a single  $n$.
\end{itemize}

Thus the total number of strings $q_{k*}$ we need is less than  
$$
k+\sum_{\delta<k-\log k-2}k2^{\delta}+2^{k-1}<2^k.
$$

To prove the theorem 
let $p$ be the $U$-program of $x$ obtained from $q$ by translation from $V$ to $U$.
Then $|p|\le k+O(1)$. Notice that $\CT_\Phi(L|q)\le \CT_\Phi(L|p)+O(1)$. Indeed, let  
$s$ be a linearly bounded translator from $V$ to $U$. Then $\Phi(r,s(q))$ is a computable function
hence there is a total computable function $t$ with $\Phi_{t(r)}(q)=\Phi_r(s(q))$.
If $\Phi_r$ is total then so is $\Phi_{t(r)}$. Hence $\CT_\Phi(L|q)\le \CT_\Phi(L|s(q))+O(1)$. 
\end{proof}

\textbf{Acknowledgments.}
The author is sincerely grateful to Alexander Shen for asking the question and
hearing the preliminary version of the proof of the result.
The author is grateful to Jason Teutsch for the idea of how to omit  the use of the fixed point theorem.
The author is grateful to Alexander Rubtsov for discovering ``intermediate'' lists. 
The author is also grateful to the hospitality of the IMS of University of Singapore.


\begin{thebibliography}9

\bibitem{bmvz}
 Bruno Bauwens, Anton Makhlin, Nikolay Vereshchagin, Marius Zimand. Short lists with short programs in short time.
In Proceedings 28-th
IEEE Conference on Computational Complexity (CCC), Stanford, CA, pages 98--108, June 2013.
ECCC report TR13-007. 

\bibitem{tz}
Jason Teutsch and Marius Zimand. On approximate decidability of minimal programs. 2014. 
Available from http://arxiv.org/abs/1409.0496 and http://people.cs.uchicago.edu/~teutsch/papers/teutschpubs.html. 

\bibitem{rogers}
Hartley Rogers, Jr., The Theory of Recursive Functions and Effective Computability, MIT Press, 1987.

\bibitem{schnorr}
C.P. Schnorr.
\newblock Optimal enumerations and optimal {G}{\"o}del numberings.
\newblock {\em Mathematical Systems Theory}, 8(2):182--191, 1975.

\bibitem{s}
Alexander Shen.
A talk on a meeting during the IMS program ``Algorithmic Randomness'' (IMS, University of Singapore, 2--30 June 2014). 


\end{thebibliography}
\end{document}